\newtheorem{algorithm1}{Algorithm}
\newtheorem{theorem}{Theorem}[section]
\newtheorem{definition}{Definition}[section]
\newtheorem{assumption}{Assumption}[section]
\newtheorem{remark}{Remark}[section]
\newtheorem{problem}{Problem}[section]
\newcommand{\ree}{\ensuremath{\mathbb{R}}}
\newcommand{\R}{\ensuremath{\mathbb{R}}}
\newcommand{\F}{\ensuremath{\mathcal{F}}}
\newcommand{\Fo}{\mathcal{F}_{\text{original}}}
\newcommand{\Fa}{{\mathcal F}_{\text{approx}}}
\title{\bf Linear Control of Time-Domain\\ Constrained Systems}
\begin{document}

\author{W.H.T.M.~Aangenent$^1$, W.P.M.H.~Heemels$^1$, M.J.G.~van~de~Molengraft$^1$,\\
D.~Henrion$^{2,3,4}$, M.~Steinbuch$^1$}

\footnotetext[1]{Department of Mechanical Engineering,
Eindhoven University of Technology,
P.O. Box 513, 5600 MB Eindhoven, The Netherlands}
\footnotetext[2]{CNRS; LAAS; 7 avenue du colonel Roche, F-31077 Toulouse; France}
\footnotetext[3]{Universit\'e de Toulouse; UPS, INSA, INP, ISAE; UT1, UTM, LAAS; F-31077 Toulouse; France}
\footnotetext[4]{Faculty of Electrical Engineering, Czech Technical University in Prague,
Technick\'a 2, CZ-16626 Prague, Czech Republic}

\maketitle

\begin{abstract}                          
This paper presents a general framework for the design of linear controllers for linear systems subject to time-domain constraints. The design framework exploits sums-of-squares techniques to incorporate the time-domain constraints on closed-loop signals and leads to conditions in terms of linear matrix inequalities (LMIs). This control design framework offers, in addition to constraint satisfaction, also the possibility of including an optimization objective that can be used to minimize steady state (tracking) errors, to decrease the settling time, to reduce overshoot and so on. The effectiveness of the framework is shown via a numerical example.
\end{abstract}

\begin{center}
\small
{\bf Keywords}: Linear Systems; Constrained systems; Polynomial methods;\\ LMIs; Motion systems.
\end{center}

\section{Introduction}\label{sec:Introduction}
The transient response to reference commands or disturbance inputs is an important performance qualifier in many control systems. Unfortunately, most control design strategies cannot cope directly with requirements on time-domain signals such as actuator amplitude
or rate limits, no output signal overshoot or undershoot, trajectory planning
constraints and so on. Especially in the continuous-time case, there are hardly any systematic controller design methods to enforce time-domain constraints on e.g. tracking errors and control inputs.

In the discrete-time case, model predictive control (MPC) (see e.g.~the surveys \cite{Mayne2000,Qin2003,Garcia1989}) is a widely used technique to cope with constraints on inputs and states. In MPC a control action is prescribed that is obtained by solving a finite or infinite horizon optimization problem that can incorporate input, state and output constraints in a direct manner. A drawback of predictive control concepts and online optimization-based methods in general is that they require a high computational effort with the consequence that they cannot be implemented on fast motion systems where high sampling rates are required, typically in the order of several kHz. Explicit MPC \cite{Bemporad2002,Tondel2003,Borrelli2003,BemporadHeemels2002} might offer an appealing solution as it precomputes a piecewise affine state feedback for discrete-time systems off-line. Still,  the explicit control law often leads to a complex description consisting of many affine feedbacks, which also cannot realize the high sampling rates typically needed for motion systems of considerable size,  although recent research is focussed on decreasing the implementation complexity of MPC, see for instance  \cite{Johansen2006,Grancharova2005,Grieder2005,Lazar2008,Kvasnica2008} and the references therein. An alternative approach with strong ties to MPC  is based on so-called reference governors, see e.g. \cite{Bemporad1997,GilKol_Aut_02} and the references therein. A reference governor is a nonlinear device that is added to a primal controller, which functions well in the absence of constraints. The reference governor modifies the reference signal supplied to the primal controller in order to enforce the input and state constraints. This approach suffers from the mentioned drawbacks in MPC to some extent  as well, but has the advantage that the  reference modifications are often needed at a lower sampling frequency than the updates of the primal control loop. A major difference with the method presented in this paper is that the overall control systems in case of reference governors become nonlinear devices modifying the supplied reference, while the method in this paper aims at designing {\em linear} controllers that satisfy the time-domain constraints without any modification of the references or disturbances.

Besides these predictive control methods, that are typically suited for a discrete-time context, there are only a few methods available in the literature that can directly synthesize controllers incorporating time-domain constraints in the continuous-time setting. For instance in the case of input constraints, \cite{Goebel2007,Heemels1998} consider the linear quadratic regulator problem with positivity constraints on the input, while various control problems with amplitude and rate constraints on the input signal are solved in \cite{Saberi2000}. The latter line of work has also been extended to stabilization and output regulation problems with amplitude and rate constraints on certain output variables, see, e.g., \cite{Saberi2002}. Other methods exist that actually allow the control output to saturate such as, for instance, the usage of anti-windup schemes \cite{Franklin2002,Tarbouriech2009,Tarbouriech2007} or {LQR/LQG} control methods \cite{Gokcek2001,ChiKab_TAC_10}. These methods, however, do not enforce constraint satisfaction but rather guarantee stability or recover performance despite the saturation nonlinearity in the loop. The above mentioned techniques cannot handle {\em time-varying} constraints, and,  except for \cite{Saberi2002}, state or output constraints are not considered either. In addition, all the above mentioned techniques result in general in {nonlinear} controllers.

As already briefly mentioned, in this paper the objective is to derive a design method for {\em linear} controllers that incorporate possibly {\em time-varying} time-domain constraints on all closed-loop signals (inputs, states and outputs). Within this context, a commonly used method to capture the essence of time-domain specifications is the reformulation into frequency domain requirements \cite{Doyle1992}. Unfortunately, such reformulations are in general either approximate, conservative or both.

A methodology to enforce time-domain constraints on the input and output of a continuous-time linear control system is presented recently in \cite{Henrion2005}, where linear matrix inequality (LMI) techniques are used to synthesize a fixed order { linear} controller that satisfies the constraints. This is done in a polynomial setting in the sense that a controller is designed according to the well-known pole placement method using the Diophantine equation. This method allows the design of a controller that results in a closed-loop transfer function with prescribed pole locations, either exact, or within an admissible region of the complex plane. In \cite{Henrion2005}, all controllers with the prescribed pole locations are characterized using the Youla-Ku\v{c}era parametrization \cite{Kucera1994}. Next the degrees of freedom of the Youla-Ku\v{c}era parametrization are used to enforce certain time-domain constraints, such as bounds on the input amplitude and output overshoot, exploiting  sums-of-squares techniques. 
Unfortunately, the approach in \cite{Henrion2005} is limited to the assignment of distinct strictly negative {\em real} closed-loop poles, which is a severe restriction in the case of many practical situations such as, for instance, lightly damped systems. As a consequence, there is a strong need for a general framework encompassing arbitrary closed-loop pole placement. The development of such a framework is the main purpose of this paper.

In particular, we propose an extension to the method in \cite{Henrion2005}, which leads to a general design framework based on sums-of-squares LMI techniques and we show indeed that the resulting linear controller satisfies the time-domain constraints on closed-loop signals, even when complex conjugate poles are assigned. This framework is based on two relaxations. One of these relaxations, of which a preliminary version was presented by the authors in \cite{Aangenent2009}, can solve the constrained control problem at hand with arbitrary accuracy and still lead to LMIs. In addition to constraint satisfaction, we will also include an objective function in the convex programming problem that can be used to minimize steady state (tracking) errors, to decrease the settling time, to reduce the overshoot and so on. As a consequence, the ideas presented in this paper will lead to a general design framework for optimized linear controllers with guarantees regarding constraint satisfaction.

The organization of the paper is as follows. The proposed methodology from \cite{Henrion2005} is briefly reviewed in Section~\ref{sec:henrionreview}. The extension to complex conjugate poles is treated in Section~\ref{sec:extcompoles}, which includes the main results. Section~\ref{sec:propconroldes} discusses the proposed control design method, and Section~\ref{sec:example} provides an illustrative example. Finally, the conclusions are stated in Section~\ref{sec:henryconcl}.\\

\section{Methodology involving real poles}\label{sec:henrionreview}
In \cite{Henrion2005} a method is presented to incorporate time-domain constraints on input and output signals of a linear system. It is shown that finding a controller of fixed order that satisfies these constraints boils down to solving a set of LMIs. In this section, we shortly review this procedure for completeness and self-containedness.
\subsection{Youla-Ku\v{c}era parametrization}\label{sec:youladio}
Consider the control system depicted in Fig.~\ref{fig:blockschemetd} with a linear single-input-single-output plant $P$ given by the strictly proper transfer function
\begin{equation}
P(s)=\frac{b(s)}{a(s)},
\label{eq:timdomcoplant}\end{equation}
where $a(s)$ and $b(s)$ are polynomials in the Laplace variable $s$.
\begin{figure}[thb]\begin{center}
\includegraphics[angle=0]{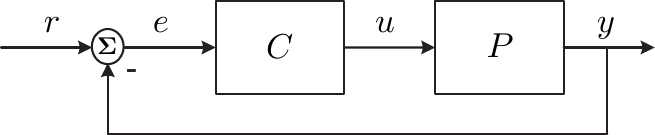} 
\caption{Block diagram of the closed-loop system with controller $C$, plant $P$, and reference signal $r$, control output signal $u$, and output signal $y$.}
\label{fig:blockschemetd}
\end{center}\end{figure}
The controller $C$, which is to be designed, is described accordingly by
\begin{equation}
C(s)=\frac{d(s)}{c(s)},
\label{eq:tdccondc}\end{equation}
resulting in the complementary sensitivity given by
\begin{equation}
T(s)=\frac{y(s)}{r(s)}=\frac{b(s)d(s)}{a(s)c(s)+b(s)d(s)}.
\label{eq:closedloop12}\end{equation}
If $a(s)$ and $b(s)$ are coprime (i.e., their greatest common divisor is 1), then arbitrary pole placement can be achieved by designing the corresponding controller polynomials. This is done by solving the polynomial Diophantine equation
\begin{equation}
a(s)c(s)+b(s)d(s)=z(s),
\label{eq:Diophantine}\end{equation}
where $z(s)\!=\!(s+p_1)(s+p_2)\ldots(s+p_n)$ is the polynomial with given roots $-p_1,\ldots, -p_n$, which are the desired poles of the closed-loop system. There are infinitely many solutions $(c(s),d(s))$ to \eqref{eq:Diophantine}, but there is a unique solution pair $\left(c_0(s), d_0(s)\right)$ such that $\deg d_0(s)\!<\!\deg a(s)$. In this case we have that $d_0(s)$ is of minimal degree and as such, $(c_0(s),d_0(s))$ is called the $d$-minimal solution pair. All possible solutions to the Diophantine equation can then be written as
\begin{equation}\begin{split}
c(s)&=c_0(s)+b(s)q(s),\\
d(s)&=d_0(s)-a(s)q(s),
\label{eq:allcontrollersparam}\end{split}\end{equation}
where $q(s)$ is an arbitrary polynomial such that $c_0(s)+b(s)q(s)$ is non-zero. This polynomial, called the Youla-Ku\v{c}era parameter \cite{Francis1987}, creates extra freedom in the design of the controller. While the closed-loop poles are invariant for any choice of the Youla-Ku\v{c}era parameter, the Youla-Ku\v{c}era parameter enables placement of closed-loop zeros to alter the response. Only {\em proper} controllers are considered and therefore there is a degree constraint on $q(s)$. Since the plant was assumed to be strictly proper, and under the additional assumption that $\deg z(s)\!\geq\! 2 \deg a(s)\!-\!1$ (to enable arbitrary pole placement with proper controllers), this constraint is given as in \cite{Kucera1999} by
\begin{equation}
\deg q(s)\leq \deg z(s)-2\deg a(s).
\label{eq:qdegreeconstaint}\end{equation}
The extra freedom in the control design parameterized by $q(s)$ satisfying \eqref{eq:qdegreeconstaint} can now be used to satisfy additional time-domain constraints as will be explained in the next section.

\subsection{A positive polynomial formulation of time-domain constraints}\label{sec:henlmipospol}
We will explain the procedure in \cite{Henrion2005} using the typical example of constraints on the step response. Hence, we consider the response $y$  to a step input ($r(s)\!=\!\frac{1}{s}$). The Laplace transform of the closed-loop system's output (assuming zero initial conditions) is then given by
\begin{equation}
y(s)=\frac{1}{s}\frac{b(s)d(s)}{z(s)}=\frac{1}{s}\frac{b(s)d_0(s)}{z(s)}-\frac{1}{s}\frac{a(s)b(s)}{z(s)}q(s).
\label{eq:laplaceoutput1}\end{equation}
At this point of the control design a \emph{restrictive assumption} was made \cite{Henrion2005}, namely
\begin{assumption}
All the assigned poles $-p_1,\ldots, -p_n$ are distinct strictly negative rational numbers.
\label{as:poleplace}\end{assumption}
Using this assumption and $z(s)\!=\!\prod_{i=1}^n(s\!+\!p_i)$ the partial fractional decomposition of \eqref{eq:laplaceoutput1} leads to
\begin{equation}
y(s,q)=\sum_{i=0}^n\frac{y_i(q)}{s+p_i},
\label{eq:partialfractionaldecom}\end{equation}
where $p_0\!=\!0$ and $y_i(q)$, $i\!=\!1,\ldots,n$ are appropriate coefficients following from the decomposition, which are influenced by the choice of the design parameter $q(s)\!=\!\sum_{i=0}^{d_q} q_is^i$. The coefficients $y_i(q)$ depend in an {\em affine} manner on the parameter $q = (q_0,q_1,\ldots,q_{d_q})$ in the sense that there exist matrices $A\in \ree^{(n+1)\times (n+1)}$, $B\in \ree^{(n+1)\times (q_d)}$ and a vector $b \in \ree^{n+1}$ such that
\begin{equation}
  A \begin{pmatrix}
    y_0(q) \\ y_1(q) \\ \vdots \\ y_{n+1}(q)
  \end{pmatrix} = B q^T + b.
\label{eq:affinedepyq}\end{equation}
This follows directly by comparing \eqref{eq:laplaceoutput1} and \eqref{eq:partialfractionaldecom}, and equating the coefficients of the powers of $s$ in the resulting numerator polynomials (see also \eqref{eq:linrelrealresex} below for an example). The corresponding time-domain signal is given by
\begin{equation}
y(t,q)=\sum_{i=0}^ny_i(q) e^{-p_it}.
\label{eq:polrepresentationtime}\end{equation}
Let $p_i\!=\!\frac{n_i}{d_i}$ be the ratios of the integers $n_i$ and $d_i$, and let $m$ denote the smallest positive number\footnote{In principle $m$ can be chosen to be any positive number that results in integer values of $\bar{p}_i$. However, it turns out that by choosing $m$ as the smallest possible positive number the order of the resulting polynomial optimization problem is the lowest.} of the denominators such that $p_i\!=\!\frac{\bar{p}_i}{m}$ for some positive integers $\bar{p}_i$, $i=0,1,\ldots,n$. This means that the time-domain output signal at time $t\!\in\!\R_+\!:=\![0,\infty)$ can now be expressed as the polynomial
\begin{equation}
y(\lambda,q)=\sum_{i=0}^ny_i(q) \lambda^{\bar{p}_i}
\label{eq:polrepresentationlamda}\end{equation}
in the indeterminate $\lambda\!=\!e^{-t/m}$. Obviously, $\lambda$ lies in the interval $[0,1]$ as $t\!\in\! {\mathbb R}_+$.
Suppose that the output $y(t,q)$ of the system needs to be bounded according to
\begin{equation}
y_{\text{min}}\leq y(t,q)\leq y_{\text{max}}\quad \forall~ t\in\R_+.
\label{eq:timedomaincs}\end{equation}
Formulation \eqref{eq:timedomaincs} is equivalent to enforcing the polynomial bound constraints
\begin{equation}
\left\{\begin{array}{l}P_1(q,\lambda):=y(\lambda,q)-y_{\text{min}}\geq0\\P_2(q,\lambda):=y_{\text{max}}-y(\lambda,q)\geq0\end{array}\right.\quad \forall~ \lambda\in[0,1],
\label{eq:positivepolycs}\end{equation}
where  $P_1$ and $P_2$ are polynomials in both $\lambda$ and $q$. This problem is a special case of the following  more general  problem  of minimizing a polynomial with polynomial constraints over a basic semialgebraic set.
\begin{definition}
A set $\mathcal D$ is called a basic semialgebraic set if it can be described as
\begin{equation}\begin{split}
{\mathcal D}\!&=\! \{ x\!\in\! \R^{n} \mid e_i(x)\! \geq\! 0, \ i\!=\!1,\ldots, M_e \text{ and } \\
&\quad\quad\quad f_j(x)\! =\! 0,\ j\!=\!1,\ldots,M_f \}
\label{eq:seiset}\end{split}\end{equation}
for certain polynomials $e_i\!:\! \R^{n_x}\! \rightarrow\! \R$, $i\!=\!1,\ldots, M_e$  and $f_j \!:\! \R^{n_x}\! \rightarrow \!\R$, $j\!=\!1,\ldots,M_f$.
\label{def:semialset}\end{definition}

\begin{problem}[Polynomial optimization problem] \label{pr:opt} Consider two variables $z\in \R^{n_z}$ and $x \in \R^{n_x}$ and let polynomials $g_i: \R^{n_z} \times \R^{n_x} \rightarrow \R$, $i=1,\ldots, M_g$, and $p:\R^{n_z} \rightarrow \R$ be given. Moreover, let a collection of  basic semialgebraic sets ${\mathcal D_l}\subseteq \R^{n_x},~l=0,\ldots,N$ be given. A (robust) polynomial optimization problem according to this data is given by
\begin{equation}
\begin{array}{ll}\underset{z}{\text{min}}&p(z)\\\text{s.t.}&\begin{array}{l}g_i(z,x)\!\geq\! 0,\quad ~i\!=\!1,\ldots,M_g\end{array}\quad\! \forall~ x\!\in\!\overset{N}{\underset{l=0}{\bigcup}}\mathcal{D}_l.
\end{array}\label{eq:generalpolprob}\end{equation}
\end{problem}
Indeed, \eqref{eq:positivepolycs} can now be written in the form of Problem~\ref{pr:opt} by taking
$z\!=\!q$, $x\!=\!\lambda$, $M_g \!=\!2,~N\!=\!0$, $p(z)\!=\!0$, $g_1(z,x)\!=\!P_1(q,\lambda)$, $g_2(z,x)\!=\!P_2(q,\lambda)$, and $\mathcal{D}_0=\{\lambda\!\in\!\R\mid0\!\leq\!\lambda\!\leq\!1\}$.
Although the bounds $y_{\text{min}}$ and $y_{\text{max}}$ in \eqref{eq:positivepolycs} are chosen to be constants for illustrating purposes, they can also be selected as polynomials in $\lambda$, i.e.,~in the form $y_{\text{min}}(\lambda)$ and $y_{\text{max}}(\lambda)$ without any complications.
In this case the bounds in \eqref{eq:timedomaincs} become {\em time-varying}. Univariate positive polynomial constraints (meaning polynomials in only one variable), such as \eqref{eq:positivepolycs} with $\lambda\in{\mathcal D}_0=[0,1] \subseteq \R$, can be transformed into LMI conditions, see \cite{Henrion2005} for the details.
Once we transformed the design problem into a polynomial optimization problem as formulated in Problem~\ref{pr:opt}, there are appropriate tools available for solving the problem. Therefore, we restrict ourselves to the transformation of the constrained control problems at hand into manifestations of Problem~\ref{pr:opt}.

The approach discussed in this section is not restricted to bounding only the output of a system. Indeed, by using the appropriate transfer functions, any signal in the loop can be constrained. The control output $u$, for example, can be bounded using
\begin{equation}
u(s)=r(s)\frac{a(s)d_0(s)}{z(s)}-r(s)\frac{a^2(s)}{z(s)}q(s)
\end{equation}
in addition to, or instead of \eqref{eq:laplaceoutput1}. Also, other Laplace transformable inputs or disturbances  can be used as long as the poles of the Laplace transform of the corresponding signal are distinct strictly negative rational numbers and differ from the closed-loop poles $p_i$. In case the disturbance signal is a (filtered) random process, the method cannot be applied as is. However, a possible extension can be to bound the infinity- or 2-norm of the suitably weighted process sensitivity (or other relevant transfer functions) via e.g.
\begin{equation}
||V(s)\frac{b(s)c(s)}{a(s)c(s)+b(s)d(s)}W(S)||_{2/\infty}\leq1
\end{equation}
This way the knowledge of stochastic disturbances can be used to shape the relevant sensitivity functions to achieve desired disturbance reduction.\\
A combination of requirements on different reference signals can easily be handled at the cost of increasing the size of the set of LMIs. As for LMIs there are efficient solvers available, e.g. \cite{Sturm1999}, transforming the problem at hand into Problem~\ref{pr:opt} provides an effective solution. The problem derived in this section  was only a feasibility problem (the cost criterion $p(z)$ was chosen to be $0$). In Section~\ref{sec:propconroldes} below, we will also provide relevant choices for the cost criterion that next to  satisfaction of  the time-domain constraints also provides additional desirable properties of the constructed controller.

\section{Problem formulation: the complex poles case}\label{sec:extcompoles}
The polynomial representation \eqref{eq:polrepresentationtime}, as derived in \cite{Henrion2005}, of the time response of a linear system to a Laplace transformable input is unfortunately only possible when strictly negative rational closed-loop poles are assigned (see Assumption~\ref{as:poleplace}). However, in many cases the assignment of purely real poles can be undesirable, especially in lightly damped systems such as most motion systems.  Furthermore, many reference signals have Laplace transforms with complex poles. If, for instance, a sinusoid is used as the reference signal instead of a step, the Laplace transform is given by $r(s)=\frac{\omega}{s^2+\omega^2}$ resulting in complex poles in the system's response. Therefore, such reference signals cannot be handled by the approach from \cite{Henrion2005}.  The main objective of this paper is to present a solution to the linear control design problem for time-domain constrained systems of which the Laplace transforms of the closed-loop responses may contain complex roots.\\
When we allow both distinct real and complex poles to be present in the closed-loop transfer function $T(s)$ and/or the Laplace transform of the reference signal $r(s)$, the Laplace transform of the system's output can be decomposed as the partial fractional decomposition
\begin{equation}\begin{split}
y(s)=&\sum_{i=0}^{n_r}\frac{y_i}{s+p_i}\\
&+\sum_{i=n_r+1}^{n_r+n_c/2+1}\frac{y_i}{s+\alpha_i+j\beta_i}+\frac{y_i^*}{s+\alpha_i-j\beta_i},
\label{eq:partialfractionaldecomrealcomp}\end{split}\end{equation}
where $n_r$ and $n_c$ denote the number of real and complex poles, respectively, $-p_i$, $i=1,\ldots,n_r$ are the locations of the real poles, $-\alpha_i\pm j\beta_i$, $i=n_r+n_c/2+1$ are the locations of complex conjugate pairs of poles, and $y_i$ are the possibly complex coefficients (with complex conjugate $y_i^*$) that affinely depend on the design parameter $q$, see \eqref{eq:affinedepyq} (we omitted this dependence on $q$ for ease of exposition). To enforce stability, we again assume that the assigned closed-loop poles have strictly negative real part. The corresponding time-domain signal is then described by
\begin{equation}
y(t)=\sum_{i=0}^{n_r}y_ie^{-p_it}\quad+\sum_{i=n_r+1}^{n_r+n_c/2+1}(y_ie^{-j\beta_it} +y_i^*e^{j\beta_it})e^{-\alpha_i t}.
\label{eq:exacttimecompsub1}
\end{equation}
As before, we use the following assumption
\begin{assumption}
$p_i$, $\alpha_i$, and $\beta_i$ are rational numbers.
\label{ass:ratnum}\end{assumption}
We denote $p_i\!=\!\frac{\bar{p}_i}{m}$, $\alpha_i\!=\!\frac{\bar{\alpha}_i}{m} $, $\tau\!=\!\frac{t}{m}$ where $m$ is the smallest positive number (not necessarily an integer) of $p_i$ and $\alpha_i$ such that $\bar{p_i}$ and $\bar{\alpha}_i$ can be taken as integers. We denote $\beta_i\!=\!\frac{\theta\bar{\beta}_i}{m}$ for a number $\theta$ (not necessarily an integer) such that $\bar{\beta}_i$ can be taken as integer as well. For guidelines how to choose $\theta$, see Remark~\ref{remark:ding} below.  Furthermore, let
\begin{equation}
\lambda=e^{-\tau}.
\label{eq:lambda3pol}\end{equation}
Using Euler's formula $e^{j\phi}=\cos(\phi)+j\sin(\phi)$ and decomposing the complex coefficients as $y_i\!=\!a_i\!+\!jb_i$, $y_i^*\!=\!a_i\!-\!jb_i$, yields
\begin{equation}\begin{split}
y(t)=&\sum_{i=0}^{n_r}y_i\lambda^{\overline{p}_i}\\
&+\sum_{i=n_r+1}^{n_r+n_c/2+1}\left(a_i2\cos(\overline{\beta}_i \theta\tau)+b_i2\sin(\overline{\beta}_i \theta\tau)\right)\lambda^{\overline{\alpha}_i}.
\label{eq:exacttimecomp}
\end{split}\end{equation}
Obviously, the terms involving the complex poles are non-polynomial in the indeterminate $\lambda$ because of the presence of   $\cos(\overline{\beta}_i \theta\tau)$ and $\sin(\overline{\beta}_i \theta\tau)$, which make it impossible to directly use the positive polynomial approach in Section~\ref{sec:henrionreview} to bound the output as in \eqref{eq:timedomaincs}. Although the parameters $\overline{\alpha}_i$, $\overline{\beta}_i$ and $\overline{p}_i$ are fixed as a result of the pole placement (and the choice of $m$ and $\theta$), there still is freedom in the choice for the coefficients $a_i, b_i$, which depend on the coefficients $q=(q_0,\ldots,q_{d_q})$ in the Youla-Ku\v{c}era parameter $q(s)$.  We propose two relaxations to determine the values $y_i,a_i, b_i$ via polynomial optimization problems to shape the time response $y(t)$, thereby overcoming the limitations in \cite{Henrion2005}. The first approach is based on an exponential bound relaxation that results in univariate polynomials. This method has the advantage that it results in a simple polynomial optimization problem, but introduces some conservatism. The second method proposes a multivariate polynomial relaxation that leads to polynomial problems as in Problem~\ref{pr:opt}, while the conservatism can be made arbitrarily small.  Both methods result in polynomial optimization problems of the type as in Problem \ref{pr:opt} that can be solved using LMIs.

\subsection{Exponential bounds relaxation}\label{sec:expbrelaxth}
To resolve the problem  induced by the presence of the products $\cos(\overline{\beta}_i \theta\tau)\lambda^{\overline{\alpha}_i}$ and $\sin(\overline{\beta}_i \theta\tau)\lambda^{\overline{\alpha}_i}$ in \eqref{eq:exacttimecomp} we relax the problem by using the fact that
\begin{equation}
\cos(\overline{\beta}_i\theta\tau),~\sin(\overline{\beta}_i \theta\tau)\in[-1,1]\quad\forall \tau\in\R,\\
\end{equation}
and instead of the exact time-response \eqref{eq:exacttimecomp}, we consider
\begin{equation}\begin{split}
y_{\text{upper}}(\lambda)&=\sum_{i=0}^{n_r}y_i\lambda^{\overline{p}_i}+\sum_{i=n_r+1}^{n_r+n_c/2+1} \left(2|a_i|+2|b_i|\right)\lambda^{\overline{\alpha}_i},\\
y_{\text{lower}}(\lambda)&=\sum_{i=0}^{n_r}y_i\lambda^{\overline{p}_i}-\sum_{i=n_r+1}^{n_r+n_c/2+1} \left(2|a_i|+2|b_i|\right)\lambda^{\overline{\alpha}_i}.
\end{split}\label{eq:boundstimecomp}\end{equation}
In contrast to \eqref{eq:exacttimecomp}, these exponential bounds on the closed-loop time response are univariate polynomials in the indeterminate $\lambda\!=\!e^{-\tau}$ (if $q$ is fixed) and can be bounded by specified polynomials $g_u(\lambda)$ and $g_l(\lambda)$ via the polynomial non-negativity constraints
\begin{equation}
\begin{array}{ll}P_3(q,\lambda):=g_u(\lambda)-y_{\text{upper}} (\lambda)&\geq0\\P_4(q,\lambda):=y_{\text{lower}}(\lambda)-g_l(\lambda)&\geq0\end{array}\quad\forall\lambda\in[0,1], \label{eq:positivepolyexpboundsa}\end{equation}
where we included the explicit dependence of $y_i,a_i$ and $b_i$ on $q$ again. The constraints \eqref{eq:positivepolyexpboundsa} cannot straightforwardly be cast in the form Problem~\ref{pr:opt} because of the nonlinear operator $|\cdot|$, which is present in these equations. However, each of the two nonlinear inequality constraints $P_3$ and $P_4$ can be expressed as $2^{n_c+1}$ equivalent polynomial inequality constraints $\tilde{P}_3$ and $\tilde{P}_4$ (2 inequalities for each absolute value expression). Enforcing non-negativity of \eqref{eq:positivepolyexpboundsa} on the interval $\lambda\in[0,1]$ 
is then again a special case of Problem \ref{pr:opt} with $z\!=\!q$, $x\!=\!\lambda$, $M_g\!=\!2,~M_h\!=\!0$, $N\!=\!0$, $p(z,x)\!=\!0$, $g_1(z,x)\!=\!\tilde{P}_3(q,\lambda)$, $g_2(z,x)\!=\!\tilde{P}_4(q,\lambda)$, and $\mathcal{D}_0=\{\lambda\!\in\!\R\mid0\!\leq\!\lambda\!\leq\!1\}$. Therefore, it is possible to determine the values $q=(q_0,\ldots,q_{d_q})$ such that the upper and lower bounds \eqref{eq:boundstimecomp} of the closed-loop time response are bounded by $g_u(\lambda)$ (e.g. $g_u(\lambda)= y_{max}$) and $g_l(\lambda)$ (e.g. $g_u(\lambda)= y_{min}$) via a polynomial optimization problem. 
The exponential bounds relaxation does 
introduce some conservatism by using relaxation \eqref{eq:boundstimecomp} instead of the exact time-response \eqref{eq:exacttimecomp}. The second method presented next offers the possibility to render this conservatism arbitrary small. In other words, the second method can approximate the original time-domain constraints with arbitrary accuracy and still lead to polynomial optimization problems.

\subsection{Multivariate polynomial relaxation}\label{sec:mulpolrelax}
The time response \eqref{eq:exacttimecomp} is equivalent to
\begin{equation}\begin{split}
&y(t)=\sum_{i=0}^{n_r}y_i\lambda^{\overline{p}_i}
+\\
&\sum_{i=n_r+1}^{n_r+n_c/2+1}\left[ (a_i\!+\!jb_i)\left(\cos(\overline{\beta}_i\theta\tau)\!-\!j\sin(\overline{\beta}_i\theta\tau)\right)\right.\\ &\quad\quad\left.\!+\!(a_i\!-\!jb_i)\left(\cos(\overline{\beta}_i\theta\tau)\!+\!j\sin(\overline{\beta}_i\theta\tau)\right)\right]\lambda^{\overline{\alpha}_i}.
\label{eq:exacttimecomp2}
\end{split}\end{equation}
De Moivre's formula, which is closely related to Euler's formula and $(e^{j\phi})^n = e^{jn\phi}$,  states that for any $\phi\in\R$ and any integer $n\!\in\!\mathbb{Z}$
\begin{equation}
\left(\cos(\phi)+j\sin(\phi)\right)^n=\cos(n\phi)+j\sin(n\phi),
\label{eq:demoivre}
\end{equation}
and hence \eqref{eq:exacttimecomp2} is equal to
\begin{equation}\begin{split}
&y(t)=\sum_{i=0}^{n_r}y_i\lambda^{\overline{p}_i}
+\\
&\sum_{i=n_r+1}^{n_r+n_c/2+1}\left((a_i+jb_i)\left[\cos(\theta\tau)-j\sin(\theta\tau)\right]^{\overline{\beta}_i}+ \right.\\
&\quad\left.(a_i-jb_i)\left[\cos(\theta\tau) +j\sin(\theta\tau)\right]^{\overline{\beta}_i}\right)\lambda^{\overline{\alpha}_i}.
\label{eq:exacttimecomp3}
\end{split}\end{equation}
Appropriate polynomial functions $w_i:\R^2\rightarrow \R$ and $r_i:\R^2\rightarrow \R$, $i=n_r+1,\ldots,n_r+n_c/2+1$ in two variables can now be defined such that \eqref{eq:exacttimecomp3}, and thus the time response \eqref{eq:exacttimecomp}, can be written as
\begin{equation}\begin{split}
y(t)=&\sum_{i=0}^{n_r}y_i\lambda^{\overline{p}_i}+\\
&\sum_{i=n_r+1}^{n_r+n_c/2+1}\hspace*{-0.5cm}\left(a_i2w_i(\cos(\theta\tau),\sin(\theta\tau))\right.\\
&\left.+b_i2r_i(\cos(\theta\tau),\sin(\theta\tau))\right)\lambda^{\overline{\alpha}_i}.
\end{split}\label{eq:exacttimecompmoivre}\end{equation}
This proves the following theorem.
\begin{theorem}
Consider the closed-loop system \eqref{eq:closedloop12} and let $y$ be the response to a reference input $r$ and assume that the Laplace transform $y(s)$ of $y$ has only distinct poles such that \eqref{eq:partialfractionaldecomrealcomp} and Assumption \ref{ass:ratnum} hold. Then we have that
\begin{equation}
  \{ y(t) \mid t\in \R^+\} = \{y(u,v,\lambda) \mid (u,v,\lambda) \in {\mathcal F}_{\text{original}} \},
\end{equation}
where $y(u,v,\lambda)$ is given by the multivariate polynomial
\begin{equation}\begin{split}
y(u,v,\lambda)=&\sum_{i=0}^{n_r}y_i\lambda^{\bar{p}_i}\\
&+\sum_{i=n_r+1}^{n_r+n_c/2+1}\left(a_i2w_i(u,v)+b_i2r_i(u,v)\right)\lambda^{\bar{\alpha}_i}
\label{eq:exacttimecomp3pol}\end{split}\end{equation}
with $w_i:\R^2\rightarrow \R$ and $r_i:\R^2\rightarrow \R$, $i=n_r+1,\ldots,n_r+n_c/2+1$ polynomials as in \eqref{eq:exacttimecompmoivre}
and
%
\begin{equation}\begin{split}
\mathcal{F}_{\text{original}}:=&\left\{(u,v,\lambda)\in \R^3 \mid u=\cos(\theta\tau),~v=\sin(\theta\tau),\right.\\
&\quad\left.\lambda=e^{-\tau} \text{ for some } \tau\in\R^+\right\}.
\end{split}\end{equation}
\label{thm6}
\end{theorem}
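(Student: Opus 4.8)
The plan is to read the claimed identity as an equality of two image sets and to reduce it to a single pointwise identity together with the obvious parametrization of $\Fo$. Write $S_1 := \{ y(t) \mid t\in \R^+\}$ and $S_2 := \{ y(u,v,\lambda) \mid (u,v,\lambda)\in\Fo\}$. Both sets are images of the half-line $\R^+$ under one and the same composite map: by its very definition $\Fo$ is the image of $\tau \mapsto (\cos(\theta\tau), \sin(\theta\tau), e^{-\tau})$ over $\tau\in\R^+$, so $S_2$ is exactly the set of values $y(\cos(\theta\tau),\sin(\theta\tau),e^{-\tau})$ for $\tau\in\R^+$. The heart of the argument is therefore to show that this last expression coincides, for every $\tau\in\R^+$, with the genuine time response $y(t)$ evaluated at $t = m\tau$.

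First I would establish this pointwise identity, which is essentially what the manipulations preceding the theorem supply. Starting from the exact response \eqref{eq:exacttimecomp}, the only obstruction to a polynomial form is the presence of $\cos(\bar{\beta}_i\theta\tau)$ and $\sin(\bar{\beta}_i\theta\tau)$. Here I would invoke De Moivre's formula: because Assumption \ref{ass:ratnum} together with the admissible choice of $\theta$ forces each $\bar{\beta}_i$ to be an \emph{integer}, one may write $\cos(\bar{\beta}_i\theta\tau) + j\sin(\bar{\beta}_i\theta\tau) = (\cos(\theta\tau)+j\sin(\theta\tau))^{\bar{\beta}_i}$, expand the right-hand side by the binomial theorem, and collect the terms in which $j$ occurs to an even, respectively odd, power. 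This yields real bivariate polynomials $w_i, r_i \in \R[u,v]$ with $\cos(\bar{\beta}_i\theta\tau) = w_i(\cos(\theta\tau),\sin(\theta\tau))$ and $\sin(\bar{\beta}_i\theta\tau) = r_i(\cos(\theta\tau),\sin(\theta\tau))$. Substituting $u=\cos(\theta\tau)$, $v=\sin(\theta\tau)$, $\lambda=e^{-\tau}$ into \eqref{eq:exacttimecomp} then converts it into the polynomial \eqref{eq:exacttimecomp3pol}; passing through \eqref{eq:exacttimecomp3} one checks that the imaginary contributions cancel in conjugate pairs, so that $y(u,v,\lambda)$ is real-valued and equals the exact response. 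This delivers the desired identity $y(t) = y(\cos(\theta\tau),\sin(\theta\tau),e^{-\tau})$ with $\tau = t/m$.

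With the pointwise identity in hand, I would close the set equality by proving both inclusions. For $S_1\subseteq S_2$: given $t\in\R^+$, set $\tau = t/m\in\R^+$ (legitimate since $m>0$); then $(\cos(\theta\tau),\sin(\theta\tau),e^{-\tau})\in\Fo$ by definition, and the identity gives $y(t) = y(\cos(\theta\tau),\sin(\theta\tau),e^{-\tau})\in S_2$. For $S_2\subseteq S_1$: given $(u,v,\lambda)\in\Fo$, pick $\tau\in\R^+$ realizing the triple; the identity gives $y(u,v,\lambda) = y(m\tau)$ with $m\tau\in\R^+$, hence $y(u,v,\lambda)\in S_1$. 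Since $m>0$, the map $\tau\mapsto m\tau$ is a bijection of $\R^+$, so no values are lost in either direction and $S_1 = S_2$.

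The main obstacle is concentrated entirely in the polynomial representation step: everything hinges on $\bar{\beta}_i$ being a genuine integer, which is precisely what Assumption \ref{ass:ratnum} and the choice of $\theta$ guarantee, and which is exactly what makes De Moivre applicable. The remaining points — that $w_i,r_i$ carry real coefficients and that the conjugate terms in \eqref{eq:exacttimecomp3} recombine into a real polynomial — are routine verifications, and the set-theoretic argument is pure bookkeeping once the pointwise identity is secured. I would therefore spend the bulk of the write-up making the De Moivre reduction precise and treat the two inclusions as short consequences.
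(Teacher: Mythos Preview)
Your proposal is correct and follows essentially the same approach as the paper: the paper's own proof is a one-liner that simply points back to the derivation preceding the theorem (Euler's formula followed by De Moivre to rewrite \eqref{eq:exacttimecompsub1} as \eqref{eq:exacttimecompmoivre} with $\lambda=e^{-\tau}$, $\tau=t/m$), which is precisely the pointwise identity you isolate as the heart of the argument. Your write-up is more explicit about the role of the integrality of $\bar{\beta}_i$ and about the two set inclusions, but the underlying reasoning is identical.
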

\begin{proof}
  The reasoning before the formulation of the theorem revealed that $y(t)$ under the given assumptions is equal to \eqref{eq:exacttimecompsub1}, which can equivalently  be written as \eqref{eq:exacttimecompmoivre}, where $\lambda= e^{-\tau}$ and $\tau=\frac{t}{m}$. 
\end{proof}
Due to Theorem~\ref{thm6}, bounding the output as in \eqref{eq:timedomaincs} to the interval $[y_{\text{min}},y_{\text{max}}]$ is equivalent to enforcing the polynomial non-negativity constraints
\begin{equation}
\begin{array}{l}P_5(q,u,v,\lambda):=y(u,v,\lambda)-y_{\text{min}}\geq0,\\P_6(q,u,v,\lambda):=y_{\text{max}}-y(u,v,\lambda)\geq0\end{array} \label{eq:positivepolymultpolvar}\end{equation}
for all $(u,v,\lambda)\in\mathcal{F}_{\text{original}}$. Recall that $y(u,v,\lambda)$ depends on $q$ via $y_i,a_i$ and $b_i$. As we mentioned before, it is of interest to transform the linear constrained control problem into Problem~\ref{pr:opt}. The conditions \eqref{eq:positivepolymultpolvar} are not in this form due to the fact that $\mathcal{F}_{\text{original}}$ is \emph{not} a (finite union of) basic semialgebraic set(s) as in Definition \ref{def:semialset}. However, this set can be overapproximated by a finite union of basic semialgebraic sets in an arbitrarily close manner.
\begin{definition} \label{def:overapproximation}
 We call a set ${\mathcal F}_{\text{approx}}$ an {\em $\varepsilon$-close overapproximation} of $\mathcal{F}_{\text{original}}$ for some $\varepsilon>0$, if it satisfies the following three properties:
\begin{enumerate}
 \item ${\mathcal F}_{\text{approx}} = \bigcup_{l=0}^N {\mathcal F}_l$ for a finite collection of basic semialgebraic sets ${\mathcal F}_0,\ldots,{\mathcal F}_N$;
\item ${\mathcal F}_{\text{original}}\subseteq {\mathcal F}_{\text{approx}}$;
\item ${\mathcal F}_{\text{approx}} \subseteq {\mathcal F}_{\text{original}} +  \mathbb{B}_\varepsilon$, where $\mathbb{B}_\varepsilon:=\{(0,0,z) \mid -\varepsilon \leq z \leq \varepsilon $.
\end{enumerate}
\end{definition}

Hence, an{~$\varepsilon$-close overapproximation} of $\mathcal{F}_{\text{original}}$ contains  the set $\mathcal{F}_{\text{original}}$ as drawn by the white line in Figure~\ref{fig:cilinder} (for $\theta\!=\!1$), but it is  $\varepsilon$-close in the sense of property 3. Hence, for small $\varepsilon>0$, replacing $\Fo$ by $\Fa$ only results in small errors and all guarantees on $\Fa$ also apply to $\Fo$ due to property 2. Moreover, due to property 1 an $\varepsilon$-close overapproximation ${\mathcal F}_{\text{approx}}$ of $\mathcal{F}_{\text{original}}$ can be used to embed the polynomial constraints in \eqref{eq:positivepolymultpolvar} for all $(u,v,\lambda)\in\mathcal{F}_{\text{original}}$ into a version of the constraints in Problem~\ref{pr:opt}, where ${\mathcal D}_l={\mathcal F}_l,~l=0,\dots,N$.
\begin{figure}[thb]\begin{center}
\includegraphics[angle=0,width=8cm]{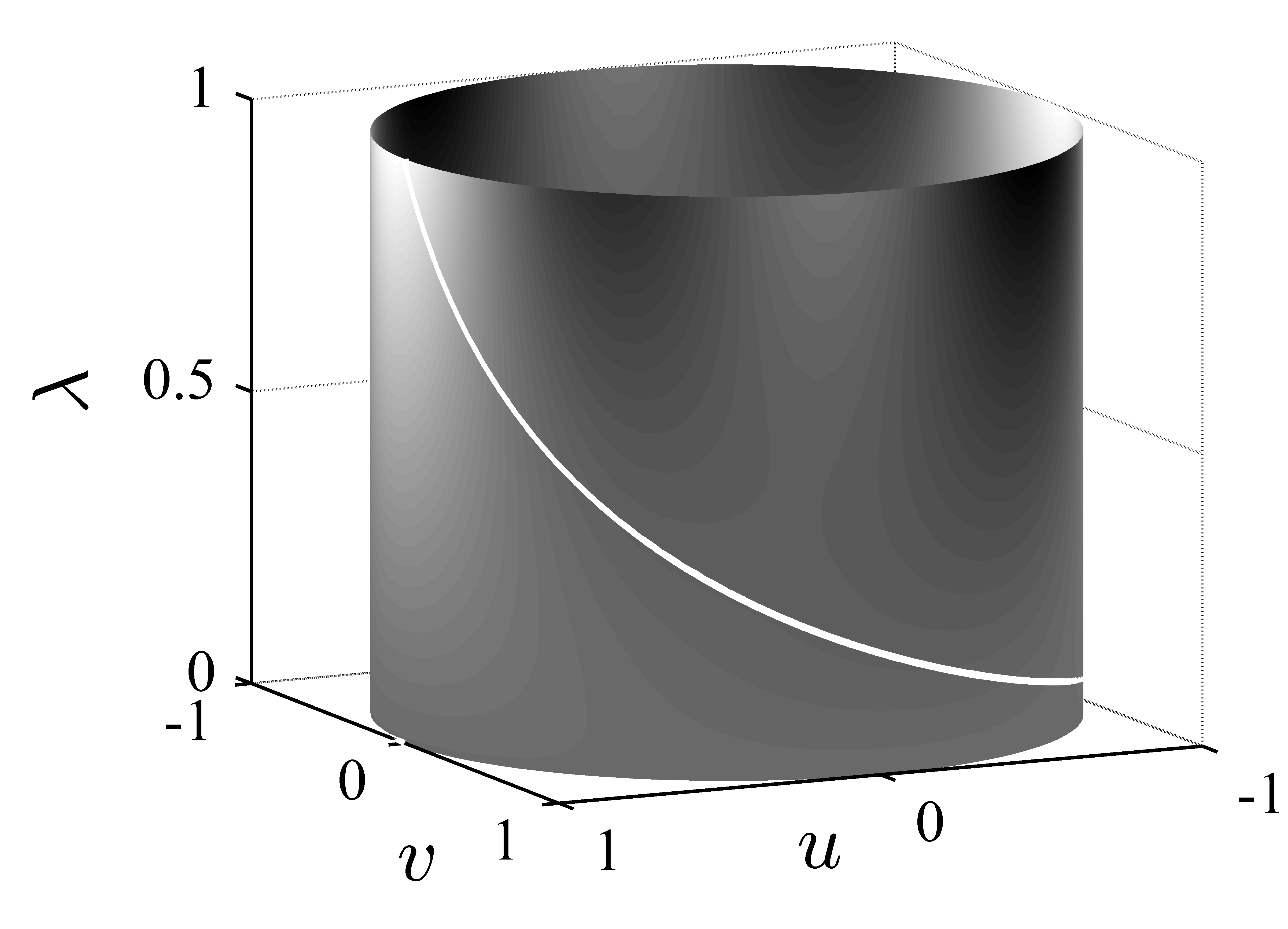} 
\caption{$\mathcal{F}_{\text{original}}$ (white line) drawn inside the cylinder given by $u^2+v^2=1$ and $0\leq \lambda \leq 1$.}
\label{fig:cilinder}
\end{center}\end{figure}

The following algorithm provides an algorithm that constructs for each desirable level of approximation $\varepsilon$ an $\varepsilon$-close overapproximation of $\Fo$. The basic idea of the algorithm is to overapproximate the $\Fo$-set by the union of basic semialgebraic sets ${\mathcal F}_l$, which are obtained by splitting the set $\Fo$ in the $\tau$-direction by considering intervals $I_l:=[\tau_{l},\tau_{l+1})$, $l=0,\ldots,N$, where  $0 =\tau_0<\tau_1<\ldots< \tau_{N+1}= \infty $. On each of these subintervals $I_l$ we approximate $e^{-\tau}$ by $\psi_l(\cos(\theta \tau), \sin(\theta \tau))$ using Fourier series, where $\psi_l:\R^2\rightarrow\R$ is a polynomial such that $|e^{-\tau} - \psi_l(\cos(\theta \tau), \sin(\theta \tau)) | \leq \varepsilon$ for all $\tau\in I_l$. Next to $\varepsilon$, the algorithm uses another parameter $0\!<\!T\!<\!\frac{2 \pi}{\theta}$, which indicates the desired length of the intervals $I_l$, $l=0,\ldots,N-1$ (although it will be modified such that all intervals have the same length).
%

\begin{algorithm1} \label{alg}
Let $0\!<\!  \varepsilon\!<\!1$ and $0\!<\!T\!<\!\frac{2 \pi}{\theta}$ be given.
\begin{description}
  \item[Step 1:] Define $N\!:=\! \lceil \frac{- \ln \varepsilon}{T} \rceil$ and $\tau_{N}\!:=\!-\ln \varepsilon$ and $\tau_{N+1}\!:=\! \infty$.
      \begin{equation}
\hspace*{-1cm}{\mathcal F}_N:=\{(u,v,\lambda) \in \R^3 \mid u^2+v^2=1 \text{ and } 0 \leq \lambda \leq \varepsilon \}.
\label{eq:FltotalN}\end{equation}
 \item[Step 2:] Divide the remaining interval $[0,\tau_{N})$ in $N$ subintervals of length $\bar{T}\!:=\! \frac{\tau_N}{N}\leq T <\frac{2 \pi}{\theta}$. $I_l\!:=\![\tau_l,\tau_{l+1})$ with $\tau_l \!=\! l\bar{T}$, $l\!=\!0,\ldots,N\!-\!1$.
 \item[Step 3:] For each $l\!=\!0,\ldots,N\!-\!1$ define a function $\phi_l:\R \rightarrow \R$ that satisfies:
\begin{itemize}
  \item $\phi_l$ is at least continuously differentiable, but preferably $m$ times continuously differentiable ($C^m$) for $m\in {\mathbb N}$ large;
\item $\phi_l$ is periodic with period $\frac{2 \pi}{\theta}$;
\item $\phi_l(\tau)=e^{-\tau}$ for all $\tau \in I_l$.
\end{itemize}
 \item[Step 4:] For each $l\!=\!0,\ldots,N\!-\!1$ compute the Fourier series approximation of $\phi_l$ of sufficiently high degree $K_l$ such that
\begin{equation} \label{eq:a}
|\phi_l(\tau) - \sum_{k=0}^{K_l} [a_k \cos(k\theta\tau) + b_k \sin (k\theta\tau) ] | \leq \varepsilon \text{ for all } \tau \in I_l,
\end{equation}
where $a_k,b_k$, $k\!=\!0,\ldots,K_l$ are the Fourier coefficients of $\phi_l$.
 \item[Step 5:] For each $l\!=\!0,\ldots,N\!-\!1$ use De Moivre's formula to rewrite $\sum_{k=0}^{K_l} [a_k \cos(k\theta\tau) + b_k \sin (k\theta\tau) ]$ obtained in the previous step as
\[\sum_{k=0}^{K_l}\sum_{i=0}^k c_{ki} (\cos(\theta\tau))^k(\sin (\theta\tau))^l =: \psi_l(\cos(\theta\tau),\sin(\theta\tau)),\]
where $\psi_l:\R^2\rightarrow \R$ is a polynomial of  degree equal to the degree of the Fourier series.
 \item[Step 6:]
 For each $l\!=\!0,\ldots,N\!-\!1$, define
\begin{multline}
  {\mathcal F}_l := \{(u,v,\lambda) \in \R^3 \mid -\varepsilon \leq \lambda-\psi_l(u,v) \leq \varepsilon \wedge \\
\hspace{1cm} u^2\!+\!v^2\!=\!1 \wedge (S_l\!-\!S_{l+1})u+(C_{l+1}\!-\!C_l)v+\\
S_{l+1}C_{l}\!-\!C_{l+1}S_l \leq 0\},
\label{eq:Fltotal}\end{multline}
where $C_l:=\cos(\theta \tau_{l})$, $S_l:=\sin(\theta \tau_l)$.
 \item[Step 7:] Take $\Fa = \bigcup_{l=0}^N {\mathcal F}_l.$
\end{description}
\end{algorithm1}

\begin{theorem}
For each $0< \varepsilon<1$ and $0<T<\frac{2 \pi}{\theta}$ Algorithm~\ref{alg} produces an $\varepsilon$-close overapproximation $\Fa$ of $\Fo$ in the sense of Definition~\ref{def:overapproximation}.
\end{theorem}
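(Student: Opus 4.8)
The plan is to verify, directly from the construction in Algorithm~\ref{alg}, the three properties listed in Definition~\ref{def:overapproximation}. Property~1 is essentially immediate: each $\psi_l$ produced in Step~5 is a genuine polynomial in $(u,v)$, so for $l=0,\ldots,N-1$ the set $\F_l$ of \eqref{eq:Fltotal} is cut out by the two polynomial inequalities $\lambda-\psi_l(u,v)+\varepsilon\geq 0$ and $\varepsilon-\lambda+\psi_l(u,v)\geq 0$, the polynomial equality $u^2+v^2-1=0$, and a single affine inequality; hence it is a basic semialgebraic set in the sense of Definition~\ref{def:semialset}, and the same holds trivially for $\F_N$ of \eqref{eq:FltotalN}. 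Since $\Fa=\bigcup_{l=0}^N\F_l$ by Step~7, Property~1 follows.

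For Property~2 I would take an arbitrary $\tau\in\R^+$ and show that the point $(\cos(\theta\tau),\sin(\theta\tau),e^{-\tau})\in\Fo$ lies in some $\F_l$. If $\tau\geq\tau_N=-\ln\varepsilon$, then $e^{-\tau}\leq\varepsilon$ and $u^2+v^2=1$, so the point is in $\F_N$. Otherwise $\tau$ lies in exactly one interval $I_l=[\tau_l,\tau_{l+1})$, and I claim the point lies in $\F_l$. The circle equality is trivial, and the band constraint $|e^{-\tau}-\psi_l(\cos\theta\tau,\sin\theta\tau)|\leq\varepsilon$ follows from Steps~4--5 together with $\phi_l(\tau)=e^{-\tau}$ on $I_l$. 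The only delicate point is the affine inequality. Writing $\gamma=\theta\tau$, $\gamma_l=\theta\tau_l$, $\gamma_{l+1}=\theta\tau_{l+1}$ and applying the angle-addition identities, its left-hand side restricted to the circle factors as
\[
(S_l-S_{l+1})\cos\gamma+(C_{l+1}-C_l)\sin\gamma+S_{l+1}C_l-C_{l+1}S_l
=-4\sin\tfrac{\gamma-\gamma_l}{2}\,\sin\tfrac{\gamma_{l+1}-\gamma}{2}\,\sin\tfrac{\gamma_{l+1}-\gamma_l}{2}.
\]
Steps~1--2 guarantee $\bar T\leq T<\tfrac{2\pi}{\theta}$, so $\tfrac{\gamma_{l+1}-\gamma_l}{2}=\tfrac{\theta\bar T}{2}\in(0,\pi)$; for $\tau\in I_l$ the two remaining half-angles lie in $[0,\pi)$, so all three sines are non-negative and the product is $\leq 0$. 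Thus the affine inequality holds and $\Fo\subseteq\Fa$.

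For Property~3 I would run the argument in reverse. Given $(u,v,\lambda)\in\F_N$, any $\tau\geq\tau_N$ matching the angle of $(u,v)$ has $e^{-\tau}\in(0,\varepsilon]$; since $\lambda\in[0,\varepsilon]$ this gives $|\lambda-e^{-\tau}|\leq\varepsilon$, so the point is in $\Fo+\mathbb{B}_\varepsilon$. For $(u,v,\lambda)\in\F_l$ with $l<N$, the circle equality and the affine inequality together localize $(u,v)$: by the same factorization, on the circle the affine expression is $\leq 0$ exactly on the arc traced by $\gamma\in[\gamma_l,\gamma_{l+1}]$ — here one also checks that the complementary arc yields a strictly positive value, using that the chord meets the circle only at its two endpoints because $0<\theta\bar T<2\pi$ — so there is a $\tau^*\in[\tau_l,\tau_{l+1}]\subseteq\R^+$ with $\cos\theta\tau^*=u$ and $\sin\theta\tau^*=v$. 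Combining the band constraint $|\lambda-\psi_l(u,v)|\leq\varepsilon$ with the Fourier bound $|e^{-\tau^*}-\psi_l(u,v)|\leq\varepsilon$ (valid on the closure of $I_l$ by continuity of $\phi_l$ and of the trigonometric polynomial) yields $|\lambda-e^{-\tau^*}|\leq 2\varepsilon$. To obtain the stated $\varepsilon$ exactly one simply runs Steps~4 and~6 with tolerance $\tfrac{\varepsilon}{2}$; this rescaling does not affect Properties~1 and~2.

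The step I expect to be the main obstacle is the trigonometric factorization together with the accompanying arc characterization used in the last two paragraphs: proving that the single affine cut in \eqref{eq:Fltotal} selects precisely the circular arc associated with $I_l$, and that this remains correct over the whole admissible range $\theta\bar T<2\pi$, i.e.\ in both the minor-arc case $\theta\bar T<\pi$ (where the cut excludes the centre) and the major-arc case $\theta\bar T>\pi$ (where it includes the centre). This is exactly where the hypothesis $T<\tfrac{2\pi}{\theta}$ is indispensable: without it the affine inequality would either omit part of the intended arc, breaking Property~2, or admit spurious circle points, breaking Property~3. The remaining work — the factor-of-two bookkeeping and the endpoint/closure checks — is routine.
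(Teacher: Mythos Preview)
Your approach is essentially the same as the paper's: both partition $[0,\infty)$ into the intervals $I_l$, verify the three properties of Definition~\ref{def:overapproximation} directly, and handle the affine constraint in \eqref{eq:Fltotal} via the chord/half-plane geometry of the unit circle. The paper argues the half-plane inclusion purely geometrically (the chord through $(\cos\theta\tau_l,\sin\theta\tau_l)$ and $(\cos\theta\tau_{l+1},\sin\theta\tau_{l+1})$ separates the two arcs), whereas your explicit factorization $-4\sin\frac{\gamma-\gamma_l}{2}\sin\frac{\gamma_{l+1}-\gamma}{2}\sin\frac{\gamma_{l+1}-\gamma_l}{2}$ gives the same conclusion analytically and makes the role of the hypothesis $T<\frac{2\pi}{\theta}$ completely transparent.

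One point worth flagging: your triangle-inequality bound $|\lambda-e^{-\tau^*}|\leq 2\varepsilon$ in the verification of Property~3 is sharp for the algorithm exactly as written, and the paper simply asserts $\mathcal{F}_l\subseteq\mathcal{F}_{\text{original},l}+\mathbb{B}_\varepsilon$ by ``similar reasoning'' without addressing this factor of two. Your proposed remedy (run Steps~4 and~6 with tolerance $\varepsilon/2$) is the natural fix; you are not missing anything here, you are being more careful than the paper.
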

\begin{proof}
First of all, we write ${\mathcal F}_{\text{original}}$ as $\bigcup_{l=0}^N {\mathcal F}_{\text{original}, l}$ with  ${\mathcal F}_{\text{original}, l}  :=\{(\cos \theta\tau, \sin \theta\tau, e^{-\tau}) \mid \tau \in I_l \}$ for $l=0,1,\ldots,N$ as $\bigcup_{l=0}^N I_l = [0,\infty)$.
Step 1 considers the interval $I_N:= [\tau_{N},\tau_{N+1}) = [-\ln \varepsilon, \infty)$ for which it holds that $0\leq  e^{-\tau}  \leq \varepsilon$.  Hence, clearly ${\mathcal F}_{\text{original}, N}\subseteq {\mathcal F}_N$ and ${\mathcal F}_N\subseteq   {\mathcal F}_{\text{original}, N} + \mathbb{B}_\varepsilon$.
The construction of functions $\phi_l$ in step 3 is possible as $e^{-\tau}$ is continuous and the fact that $0<\bar{T}\leq T<\frac{2 \pi}{\theta}$. Hence, step 3 can always be taken, while the function $\phi_l$ can still be made $\frac{2 \pi}{\theta}$-periodic and continuously differentiable.
Step 4 can be realized, because the Fourier series converges uniformly to a continuously differentiable periodic function, see, e.g., \cite{Powers2006,Asmar2005}. Therefore, uniform convergence proves the existence of a finite $K_l$ such that \eqref{eq:a} holds.
Note now that for $(u,v,\lambda)\in\mathcal{F}_{\text{original},l}$ it holds that $u^2\!+\!v^2\!=\!1$. Obviously, $(\cos(\theta\tau),\sin(\theta\tau))$ for $\tau\!\in\! I_l\!=\![\tau_l,\tau_{l\!+\!1})$ lies in one of the half spaces generated by  
the straight line in $\R^2$ through the points  $(\cos(\theta\tau_l),\sin(\theta\tau_l))$ and  $(\cos(\theta\tau_{l\!+\!1}),\sin(\theta\tau_{l\!+\!1})$ given by
\begin{multline}
[\sin(\theta\tau_l)-\sin(\theta\tau_{l\!+\!1})]\underbrace{\cos(\theta\tau)}_u+[\cos(\theta\tau_{l\!+\!1})-\cos(\theta\tau_l)]\underbrace{\sin(\theta\tau)}_v\\+\sin(\theta\tau_{l\!+\!1})\cos(\theta\tau_l)-\cos(\theta\tau_{l\!+\!1})\sin(\theta\tau_l)=0,
\label{eq:taudivide}\end{multline}
see Figure \ref{fig:tau_divide}. In particular, for all $(u,v,\lambda)\in\mathcal{F}_{\text{original},l}$ it holds that
\begin{equation}
(S_l-S_{l+1})u+(C_{l+1}-C_l)v+S_{l+1}C_{l}-C_{l+1}S_l \leq 0
\label{eq:taudivedcylinder}\end{equation}
where $C_l\!:=\!\cos(\theta\tau_l)$, $S_l\!:=\!\sin(\theta\tau_l)$. Due to \eqref{eq:a}, step 5, and using the above observations, it holds that $\mathcal{F}_{\text{original},l}\!\subseteq\!\mathcal{F}_{l}$. Moreover, similar reasoning using \eqref{eq:a} shows that $\mathcal{F}_{l}\!\subseteq\!\mathcal{F}_{\text{original},l}\!+\!\mathbb{B}_\varepsilon$.
Hence, by taking $\Fa$ as the union of the resulting sets, i.e. $\Fa\!=\!\bigcup_{l=0}^N {\mathcal F}_l$, an $\varepsilon$-close overapproximation $\Fa$ of $\Fo$ is obtained. This  completes the proof.
\end{proof}
\begin{figure}[thb]\begin{center}
\includegraphics[angle=0]{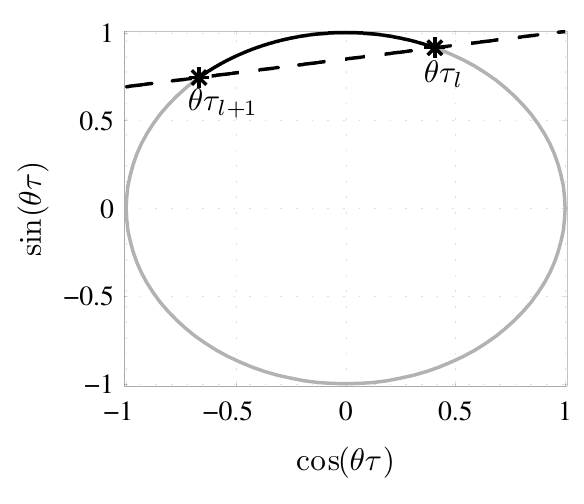}
\caption{Top-view of the cylinder $\{(u,v,\lambda) \in \R^3 \mid u^2\!+\!v^2\!=\!1, 0\leq\lambda\leq1\}$ (solid grey), the hyperplane $\{(u,v,\lambda) \in \R^3 \mid \eqref{eq:taudivide}\}$  (dashed), and the resulting part of the cylinder given by $\{(u,v,\lambda) \in \R^3 \mid u^2\!+\!v^2\!=\!1,\eqref{eq:taudivedcylinder}, 0\leq\lambda\leq1\}$ (solid black).}
\label{fig:tau_divide}
\end{center}\end{figure}

For instance, for $\theta\!=\!1, \varepsilon\!=\!e^{-1.5\pi}\!\approx\!0.009$ and $T=\bar{T}\!=\!0.75\pi$ the $\varepsilon$-close overapproximation $\Fa$ of $\Fo$  can be generated with $\tau_0=0$, $\tau_1=0.75\pi$, $\tau_2 = 1.5\pi$ and $\tau_3=\infty$ and polynomials
\begin{equation}\label{precomp}\begin{split}
\psi_0(u,v)\!&=\!0.398u\!-\!0.971v\!+\!0.616u^2\!-\!0.192uv\!+\!1.179v^2\\
&-\!0.015u^3\!+\!0.184u^2v,\\
\psi_1(u,v)\!&=\!0.033u\!+\!0.096v\!+\!0.0760u^2\!+\!0.0534uv\!+\!0.094v^2\\
&+\!0.013uv^2\!-\!0.011v^3.
\end{split}\end{equation}
This overapproximation and the polynomials are illustrated in Figure \ref{fig:psifuncties}.
\begin{figure}[thb]\begin{center}
\includegraphics[width=0.3\textwidth,angle=0]{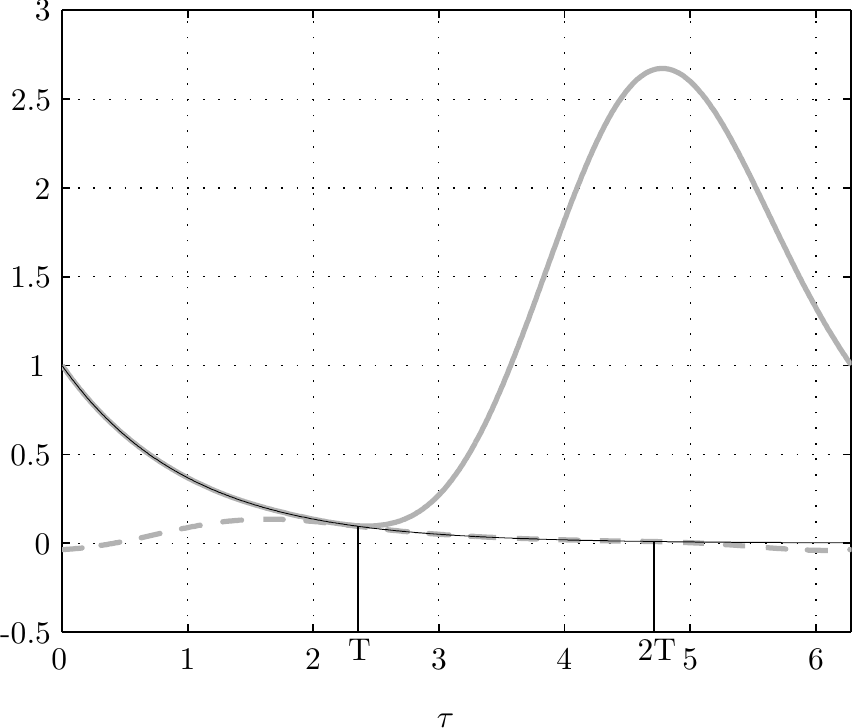}
\caption{Functions $e^{-\tau}$ (solid black), $\psi_0$ (solid grey), and $\psi_1$ (dashed grey).}
\label{fig:psifuncties}
\end{center}\end{figure}
If one is satisfied with an overapproximation accuracy of $\varepsilon\!=\!e^{-1.5\pi}\!\approx\!0.009$, then one can use this precomputed overapproximation (for the case $\theta=1$). If it is desired to have a simpler overapproximation (with less regions and polynomials of lower degree) or an even tighter approximation with $\varepsilon<0.009$, one can run Algorithm~\ref{alg} to obtain it.

\begin{remark} A few comments  are in order:
\begin{itemize}
\item The reason to take the functions $\phi_l$ in step 3 as smooth as possible ($m$ in $C^m$ as large as possible) is that a lower degree $K_l$ is needed to satisfy \eqref{eq:a}. Indeed, when $\phi_l\in C^m$ then the Fourier coefficients satisfy $k^ma_k\!\rightarrow\!0$, $k^mb_k\!\rightarrow\!0$ when $k\!\rightarrow\!\infty$ and thus the approximation error $|\sum_{k=0}^{K_l} [a_k \cos(k\theta\tau) + b_k \sin (k\theta\tau) ]-e^{-\tau}|=|\sum_{k=K_l}^{\infty} [a_k \cos(k\theta\tau) + b_k \sin (k\theta\tau) ]|$ on $I_l$ is smaller than $\varepsilon$ for smaller values of $K_l$.
\item A sufficiently high degree of $K_l$ such that \eqref{eq:a} holds can be obtained by increasing $K_l$ incrementally until \eqref{eq:a} is satisfied. If one is satisfied with an overapproximation accuracy of $\varepsilon\!=\!e^{-1.5\pi}$, then the precomputed overapproximation \eqref{precomp} with $N=2$ and $K_l=3$ can be used in case $\theta\!=\!1$.
\item Instead of selecting $T$ a priori, we can also select a maximal degree $K$ of the approximation functions $\psi_l$ in the sense that $K_l\leq K$ for all $l=0,1,\ldots,N-1$. Instead of increasing the degrees of the approximation functions $\psi_l$, one now can split the time interval $[0,-\ln \varepsilon)$ into smaller pieces until $|\psi_l(\cos \theta\tau,\sin \theta\tau) - e^{-\tau}|\leq \varepsilon$ for all $\tau \in I_l$ is satisfied for the fixed (low) degree $K$. This might lead to more regions (a larger $N$). This indicates that there is a trade-off between $N$ (number of basic semialgebraic sets in the overapproximation $\Fa$) and $K$ (the maximal degree of the Fourier series approximation). The smaller $N$ the higher $K$ and vice versa. However, note that the example of the overapproximation with $N=2$ and $K=3$ already provides a very tight approximation of $\varepsilon\!=\!e^{-1.5\pi}\!\approx\!0.009$ in case $\theta\!=\!1$.
\item The reason for splitting the set $\Fo$ in the $\tau=\frac{t}{m}$ direction is that the exponential function $\lambda\!=\!e^{-\tau}$ can generally not be $\varepsilon$-close approximated with the basis functions $\cos(\theta\tau)$ and $\sin(\theta\tau)$ in the interval $[0,- \ln \varepsilon]$ (unless $\varepsilon$ is chosen relatively large or $\theta$ is very small and thus $T$ can be selected larger). The additional scaling~$\theta$ can be used to adjust the period of $\cos(\theta\tau)$ and $\sin(\theta\tau)$ thereby offering a trade-off between the required amount of intervals $N$ and the order of the polynomial $y(u,v,\lambda)$ (provided the integer requirement on $\overline{\beta}_i$ is fulfilled). Indeed, $N\!=\!1$ in Definition~\ref{def:overapproximation} can be obtained by choosing $\theta$ sufficiently small such that $e^{-\tau}$ can be $\varepsilon$-close approximated using $u\!=\!\cos(\theta\tau)$ and $v\!=\!\sin(\theta\tau)$ in the interval $[0,- \ln \varepsilon]$ for any $\varepsilon$. However, this implies that the polynomial order $\overline{\beta}_i$ (see \eqref{eq:exacttimecomp3}) increases due to the relation $\beta_i\!=\!\frac{\theta\bar{\beta}_i}{m}$, which is undesirable from a computational point of view. On the other hand, if $\theta$ is chosen as large as possible (while respecting the integer requirement on $\overline{\beta}_i$), the order of the polynomial $y(u,v,\lambda)$ in \eqref{eq:exacttimecomp3} is minimal but a large amount of regions $N$ could be required.  
\end{itemize}
\label{remark:ding}\end{remark}
Using the $\varepsilon$-close overapproximation $\Fa$, the conditions \eqref{eq:positivepolymultpolvar} again are a special case of Problem~\ref{pr:opt} with $z\!=\!q$, $x\!=\!(u,v,\lambda)$, $\ell\!=\!2,~m\!=\!0$, $p(z,x)\!=\!0$, $g_1(z,x)\!=\!P_5(q,u,v,\lambda)$, $g_2(z,x)\!=\!P_6(q,u,v,\lambda)$, and $\mathcal{D}_l\!=\!\mathcal{F}_l,~l\!=\!0,\ldots,N$. However, since the polynomial positivity constraints \eqref{eq:positivepolymultpolvar} are now multivariate (meaning polynomials in more than one variable) instead of univariate, the equivalent LMI expression from \cite{Henrion2005} is not applicable. In general, checking positivity of a multivariate polynomial on a basic semialgebraic set is a hard problem, but it can often be approximated as closely as desired by a hierarchy of convex relaxations
\cite{Laurent2009,Lasserre2009}.
In this case, to make Problem~\ref{pr:opt} computationally tractable, the inequality conditions from \eqref{eq:generalpolprob} are replaced by stronger conditions in terms of primal moment and dual sums-of-squares (SOS) problems
to formulate a hierarchy of upper bounds on the minimum in Problem~\ref{pr:opt} that converge in the limit to the real minimum.
We refer the reader to \cite{Laurent2009,Lasserre2009}
for the conversion techniques to obtain the LMIs and further details.
There exist software packages, such as GloptiPoly \cite{Henrion2003}, SOSTOOLS \cite{Prajna2002} or YALMIP \cite{Yalmip},
that automatically build up a hierarchy of LMI relaxations, whose associated monotone sequence of optimal values converges to the global optimum.
Numerical certificates of optimality are also available, in terms of ranks of embedded moment matrices,
see \cite{Laurent2009,Lasserre2009}. 

\section{Control design procedure }\label{sec:propconroldes}

To summarize the previous design setup, suppose that the $d$-minimal controller \eqref{eq:tdccondc} for system \eqref{eq:timdomcoplant} has been designed via the Diophantine equation \eqref{eq:Diophantine} such that the desired closed-loop pole locations are achieved. Also suppose that more closed-loop poles are assigned than twice the number of poles of the plant. Then, according to \eqref{eq:qdegreeconstaint} there is additional control design freedom parameterized in the form of the Youla-Ku\v{c}era parameter, which can be used to shape the closed-loop time response. Time-domain constraints can be imposed using one of the  proposed relaxations in Section \ref{sec:extcompoles}, which yield the polynomial constraints in \eqref{eq:generalpolprob}. Next to constraint satisfaction,  \eqref{eq:generalpolprob} allows also the minimization of an objective function $p(z)$. This minimization can be exploited to obtain additional desired properties of the response  in terms of the design parameters $q$ and thus $y_i,a_i$ and $b_i$.  Consider the step response for example, which can be written as \eqref{eq:partialfractionaldecomrealcomp} with $p_0\!=\!0$ and where $y_0$ is the steady-state solution. Desirable properties of the unit step response are, for instance, a zero steady-state error, a small settling-time and small overshoot.
These properties can be accommodated in $p(z)$ as follows
\begin{description}
\item[Small steady-state error:] Set $p(z)\!=\!(1\!-\!y_0)^2$ to minimize the steady-state error.
\item[Short settling-time:] Set $p(z)\!=\!a_i^2\!+\!b_i^2$, where index $i$ corresponds to a slow mode in \eqref{eq:exacttimecomp}, to minimize the contribution of this mode, which improves the settling time. Alternatively, exponentially decreasing constraints can be specified that directly impose a certain desired settling behavior.
\item[Overshoot minimization:] In case one is interested in constraining the response by a fixed constant, e.g. constraining the overshoot, the exponential bounds relaxation is not suitable. This is due to the fact that the peak values of the systems response often occur in the time interval where the exponential upper and lower bounds are still very far from the actual signal. As opposed to the exponential bounds relaxation, the multivariate polynomial relaxation from Section~\ref{sec:mulpolrelax} is typically suited to minimize the overshoot of a step response by constraining the response \eqref{eq:exacttimecomp3pol} as $y(u,v,\lambda)\!\leq\!\gamma$ and specify $p(z)\!=\!\gamma$ to minimize the overshoot.
\end{description}

Of course, one can combine the above objectives in $p(z)$ using suitable weighting factors. Furthermore, the proposed relaxations enable the incorporation of the extensions that were mentioned in Section \ref{sec:henlmipospol} (e.g. related to responses to disturbances) in case the poles of the Laplace transform of the corresponding signal are complex. Setting up the optimization problem \eqref{eq:generalpolprob} by including the time-domain constraints on closed-loop signals using one of the proposed relaxations and defining the objective function $p(z)$ provides a systematic manner for obtaining linear controllers with desirable properties. This design framework will be illustrated in the next section.

\section{Numerical example}\label{sec:example}
We start with a simple simulation example to illustrate the efficiency of the proposed design method. Consider the simple model given by 
\begin{equation}
P(s)=\frac{y(s)}{u(s)}=\frac{1}{s+1}.
\end{equation}
The control objective is to let $y$ track a step reference from $0$ to $1$ as close as possible. Moreover, the controller \eqref{eq:tdccondc} will be designed such that the assigned complex closed-loop poles are $p_{1,2}\!=\!-1\pm2j$, $p_{3,4}\!=\!-2\pm4j$.
This is done by solving the Diophantine equation \eqref{eq:Diophantine} leading to the $d$-minimal controller
\begin{equation}
C(s)=\frac{d_0(s)}{c_0(s)}=\frac{68}{s^3+5s^2+28s+32},
\label{eq:origcontroller}\end{equation}
resulting in the closed-loop system given by the complementary sensitivity function
\begin{equation}
T(s)=\frac{68}{s^4+6s^3+33s^2+60s+100}.
\label{eq:origcontrollerclosedloop}\end{equation}
Using the Youla-Ku\v{c}era parameter $q(s)$ and realizing that according to \eqref{eq:qdegreeconstaint} we have $\deg q(s)\!\leq\! 2$, i.e., $q(s)\!=\!q_0\!+\!q_1s\!+\!q_2s^2$, the set of allowable controllers assigning the specified closed-loop poles is parameterized as
\begin{equation}\begin{split}
C(s)&=\frac{d(s)}{c(s)}=\frac{d_0(s)-a(s)q(s)}{c_0(s)+b(s)q(s)}\\
&=\frac{68-(q_0+(q_0+q_1)s+(q_1+q_2)s^2+q_2s^3)}{s^3+5s^2+28s+32+(q_0+q_1s+q_2s^2)},
\end{split}\end{equation}
resulting in the set of closed-loop transfer functions
\begin{equation}
T(s)=\frac{68-(q_0+(q_0+q_1)s+(q_1+q_2)s^2+q_2s^3)}{s^4+6s^3+33s^2+60s+100}.
\label{eq:exampleclosedlooptrans}\end{equation}
The Laplace transform of the time response of \eqref{eq:exampleclosedlooptrans} to a step input is then parameterized as
\begin{equation}\begin{split}
y(s)&=\frac{1}{s}T(s)=\\
&\frac{68-(q_0+(q_0+q_1)s+(q_1+q_2)s^2+q_2s^3)}{s(s+1+2j)(s+1-2j)(s+2+4j)(s+2-4j)}.
\label{eq:closedtimestepex}\end{split}\end{equation}
The corresponding partial fractional decomposition is equal to
\begin{equation}\begin{split}
y(s)=&\frac{y_0}{s}+\frac{a_1+jb_1}{s+1+2j}+\frac{a_1-jb_1}{s+1-2j}+\frac{a_2+jb_2}{s+2+4j}\\
&+\frac{a_2-jb_2}{s+2-4j}
\label{eq:closedtimestepexparfrac}\end{split}\end{equation}
where $y_0,a_1,b_1,a_2,b_2$ can be solved from the linear system of equations
\small\begin{equation}
\left[ \begin {array}{ccccc} 100&0&0&0&0\\60&40&80&20&40\\33&48&16&18&16\\6&10&4&8&8\\1&2&0&2&0\end {array} \right]
\left[\begin{array}{c}y_0\\a_1\\b_1\\a_2\\b_2\end{array}\right]\!=\!
\left[ \begin {array}{c} 68\\0\\0\\0\\0\end {array} \right]\!+\!\left[ \begin {array}{ccc} -1&0&0\\-1&-1&0\\0&-1&-1\\0&0&-1\\0&0&0\end {array} \right]
\left[\begin{array}{c}q_0\\q_1\\q_2\end{array}\right],
\label{eq:linrelrealresex}\end{equation}
\normalsize
where $q_0,q_1,q_2$ are the free variables in the Youla-Ku\v{c}era parameter to shape the time response. The goal is to determine values of $y_0,a_1, b_1, a_2, b_2$ (via $q_0,q_1,q_2$) such that the closed-loop time response to the step input has a favorable shape. 
The LMI problems were modeled with YALMIP \cite{Yalmip} and solved with SeDuMi \cite{Sturm1999}.

\subsection{Using the exponential bounds relaxation}\label{sec:applicationexprelax}
The exponential bounds on the step response of the closed-loop system \eqref{eq:exampleclosedlooptrans} are given by
\begin{equation}\begin{split}
\bar y_{\text{upper}}(t)&=y_0+(2|a_1|+2|b_1|)e^{-t}+(2|a_2|+2|b_2|)e^{-2t},\\
\bar y_{\text{lower}}(t)&=y_0-(2|a_1|+2|b_1|)e^{-t}-(2|a_2|+2|b_2|)e^{-2t},
\end{split}\end{equation}
where $y_0,a_1,b_1,a_2,b_2$ are related to $q_0,q_1,q_2$ via \eqref{eq:linrelrealresex}. The goal of this relaxation is to determine $q_0,q_1,q_2$ such that
\begin{equation}
\begin{array}{ll}P_3(\lambda)=g_u(\lambda)-y_{\text{upper}}(\lambda)&\geq0,\\P_4(\lambda)=y_{\text{lower}}(\lambda)-g_l(\lambda)&\geq0,\end{array} \label{eq:positivepolyexpboundsaexample}\end{equation}
for appropriately chosen {\em time-varying} bounds related to $g_u(\lambda)$ and $g_l(\lambda)$. Note that  $\lambda\!=\!e^{-t}$ and
\begin{equation}\begin{split}
y_{\text{upper}}(\lambda)&=y_0+(2|a_1|+2|b_1|)\lambda+(2|a_2|+2|b_2|)\lambda^2,\\
y_{\text{lower}}(\lambda)&=y_0-(2|a_1|+2|b_1|)\lambda-(2|a_2|+2|b_2|)\lambda^2.\\
\end{split}\label{eq:expboundsites}\end{equation}

To define the {time-varying} bounds, we first consider suitable upper and lower bounds on the step response that correspond to the $d$-minimal controller \eqref{eq:origcontroller} (with $q\!=\!0$). These are given by
\begin{equation}\begin{split}
g_{u_{\text{original}}}(\lambda)&=0.68+1.58\lambda+0.38\lambda^2,\\
g_{l_{\text{original}}}(\lambda)&=0.68-1.58\lambda-0.38\lambda^2,
\label{eq:origbounds}\end{split}\end{equation}
respectively. Based on these bounds, we now define tighter bounds $g_u(\lambda)$ and $g_l(\lambda)$ that are specified to be
\begin{equation}\begin{split}
g_u(\lambda)&=g_{u_{\text{original}}}(\lambda)+c_u=1.01+1.58\lambda+0.38\lambda^2\\
g_l(\lambda)&=g_{l_{\text{original}}}(\lambda)+c_l=0.99-1.58\lambda-0.38\lambda^2.
\end{split}\end{equation}
where $c_u\!=\!0.33$ and $c_l\!=\!0.31$ to guarantee a small steady-state error (smaller than $0.01$). The dominant term in the upper and lower bound in \eqref{eq:expboundsites} corresponds to the slow mode $e^{-t}$ with coefficient $(2|a_1|+2|b_1|)$. To minimize the contribution of this term and to bring the steady-state error close to zero, we specify the objective function according to Section~\ref{sec:propconroldes} as
\begin{equation}
p(q_0,q_1,q_2)=10(1-y_0)^2+2(a_1^2+b_1^2),
\label{eq:objectiveexpboundrel}\end{equation}
where $y_0$, $a_1$ and $b_1$ depend on $q_0,q_1,q_2$ as in \eqref{eq:linrelrealresex}. This results in the optimization problem
\begin{equation}
\begin{array}{ll}\underset{q_0,q_1,q_2}{\text{min}} &p(q_0,q_1,q_2)\\
\text{~~s.t.}&\eqref{eq:linrelrealresex}\\
&\eqref{eq:positivepolyexpboundsaexample}\quad \forall~\lambda\in[0,1].\end{array}
\label{eq:LMIproblem}\end{equation}
The Youla-Ku\v{c}era parameter resulting from the minimization problem \eqref{eq:LMIproblem} is
\begin{equation}
q(s)=-32.0-23.0s-3.0s^2,
\end{equation}
which yields the controller and closed-loop
\begin{equation}
C(s)=\frac{3s^3+26s^2+55s+100}{s^3+2s^2+5s},
\label{eq:newcontclosed}\end{equation}
\begin{equation}
T(s)=\frac{3s^3+26s^2+55s+100}{s^4+6s^3+33s^2+60s+100},
\label{eq:newcontclosed2}\end{equation}
together with the new bounds
\begin{equation}\begin{split}
g_{u_{\text{new}}}(t)&=1.00+1.25e^{-2t},\\
g_{l_{\text{new}}}(t)&=1.00-1.25^{-2t}.
\end{split}\end{equation}
This shows that the steady-state error is zero and the contribution of the slow mode has been completely eliminated.
The step responses together with their bounds of the original closed-loop system \eqref{eq:origcontrollerclosedloop} and of the new, optimized closed-loop system \eqref{eq:newcontclosed2} are depicted in Fig.~\ref{fig:stepresponsecompol12a}(a), while the Bode diagrams of the original controller \eqref{eq:origcontroller} and the new controller \eqref{eq:newcontclosed} are shown in Fig.~\ref{fig:stepresponsecompol12a}(b).
\begin{figure}[thb]\begin{center}
\subfigure[] {\includegraphics[angle=0]{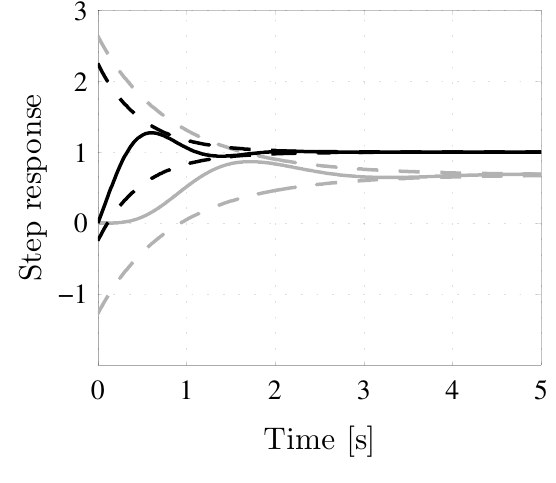}}
\subfigure[] {\includegraphics[angle=0]{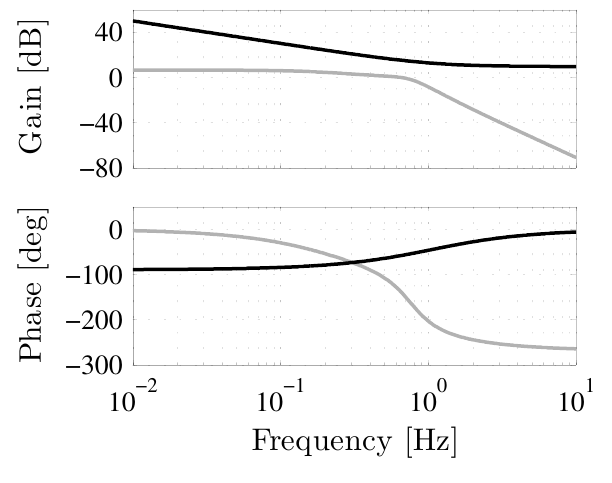}}
\caption{Results exponential bounds relaxation. (a)~New (solid black) and original (solid grey) step responses, original bounds (dashed grey), and new bounds (dashed black), (b)~Bode diagrams of the original (grey) and the new (black) controller.}
\label{fig:stepresponsecompol12a}
\end{center}\end{figure}
From Fig.~\ref{fig:stepresponsecompol12a}(a) it is obvious that the step response of the designed closed-loop system satisfies the specified bounds and additionally results in zero steady-state tracking error. Upon examination of Fig. \ref{fig:stepresponsecompol12a}(b) this can be explained by the fact that controller \eqref{eq:newcontclosed} exhibits an overall higher gain and hence results in a higher bandwidth of the closed-loop system resulting in a faster response, while it also implements integrating action providing the steady-state accuracy. Although there is some conservatism introduced by the fact that upper and lower bounds are used, this example demonstrates that a significant increase of the performance can be obtained using this method.
\begin{remark}
In this example the ability of the method to minimize the contribution of the slow mode using an additional objective function $p(z)$ in Problem~\ref{pr:opt} was demonstrated. Because of the specific objective function \eqref{eq:objectiveexpboundrel} the slow mode was completely cancelled within the controller. Therefore, for this specific example another way to arrive at controller \eqref{eq:newcontclosed} is to specify only two closed-loop poles $p_{3,4}\!=\!-2\pm4j$ (without $p_{1,2}\!=\!-1\pm2j$), resulting in $C(s)=\frac{3s+20}{s}$, which indeed is the minimal form of \eqref{eq:newcontclosed}. Interestingly, the optimization problem results in this controller in an automated and systematic manner.
\end{remark}

\subsection{Using the multivariate polynomial relaxation}\label{sec:multvarrelaxapplic}
In this section, the multivariate polynomial relaxation from Section~\ref{sec:mulpolrelax} is used to obtain suitable values of $q_0,q_1,q_2$ to improve the step response of closed-loop system \eqref{eq:exampleclosedlooptrans}. We have that $m\!=\!1,n_r\!=\!0,n_c\!=\!2$, $\overline{\alpha}_1\!=\!-1$, $\overline{\alpha}_2\!=\!-2$, $\overline{\beta}_1\!=\!2$ and $\overline{\beta}_2\!=\!4$. Let $\theta\!=\!1$ such that $u\!=\!\cos(\tau)$ and $v\!=\!\sin(\tau)$ so that \eqref{eq:exacttimecomp3} yields
\begin{equation}\begin{split}
y(t)&=\left((a_1+jb_1)\left(u+jv\right)^{2}+ (a_1-jb_1)\left(u +jv\right)^{2}\right)\lambda\\
&+\left((a_2+jb_2)\left(u+jv\right)^{4}+ (a_2-jb_2)\left(u +jv\right)^{4}\right)\lambda^2\\
&=\left(2a_1\left(u^2\!-\!v^2\right)+2b_12uv\right)\lambda\\
&+\left(2a_2\left(u^4\!+\!v^4\!-\!6u^2v^2\right)+2b_1\left(4vu^3\!-\!4uv^3\right)\right)\lambda^2.
\label{eq:exacttimecomp4}
\end{split}\end{equation}
As a consequence, for this example we obtain
\begin{equation}\begin{split}
w_1(u,v)&=u^2-v^2,\quad r_1(u,v)=2uv\\
w_2(u,v)&=u^4+v^4-6u^2v^2,\quad r_2(u,v)=4vu^3-4uv^3.
\end{split}\label{eq:qiriappen}\end{equation}
yielding the time response
\begin{equation}\begin{split}
y(u,v,\lambda)=&y_0+(2a_1(u^2-v^2)+4b_1uv)\lambda\\
&+(2a_2(u^4+v^4-6u^2v^2)+8b_2(vu^3-uv^3))\lambda^2,
\label{eq:maxtimel}\end{split}\end{equation}
which is a multivariate polynomial with 3 independent variables ($u,v,\lambda$) and three decision variables ($q_0,q_1,q_2$). Note that $(u,v,\lambda)\!\in\!\Fo$. Since $\Fo$ is not the finite union of a basic semialgebraic set, we can use Algorithm \ref{alg} or use the precomputed overapproximation in Section~\ref{sec:mulpolrelax} to obtain an $\varepsilon$-close overapproximation $\Fa$ of $\Fo$. We use here the precomputed overapproximation $\Fa$ with  $\varepsilon\!=\!e^{-1.5\pi}\!\approx\!0.009$. In accordance with Section~\ref{sec:propconroldes}, we formulate the problem as to find $q$ such that the overshoot $\gamma$ is small and that the steady-state error of the step response is minimized. Therefore, the problem is posed as
\begin{equation}
\begin{array}{ll}\underset{q_0,q_1,q_2}{\text{min}}&10(1-y_0)^2+\gamma\\\text{~~s.t.}&\eqref{eq:linrelrealresex}\\ &\gamma-y(u,v,\lambda)\geq0\quad \forall (u,v,\lambda)\in \Fa.\end{array}
\end{equation}
Rewriting this optimization problem  gives
\begin{equation}
\begin{array}{ll}\underset{q_0,q_1,q_2}{\text{min}}&10(1-y_0)^2+\gamma\\\text{~~s.t.}&\eqref{eq:linrelrealresex}\\ &\gamma-y(u,v,\lambda)\geq0\quad \forall (u,v,\lambda)\in \F_0\\
&\gamma-y(u,v,\lambda)\geq0\quad \forall (u,v,\lambda)\in \F_1\\
&\gamma-y(u,v,\lambda)\geq0\quad \forall (u,v,\lambda)\in \F_2,\end{array}
\label{eq:totlaproblemultvar}\end{equation}
with $\F_{0,1}$ as in \eqref{eq:Fltotal} and $\F_2$ as in \eqref{eq:FltotalN}.
This optimization problem is then solved with a hierarchy of LMI relaxations, as explained at the end of Section
\ref{sec:mulpolrelax}. The size of the resulting LMI problem depends on the order of the relaxation, and this can be used
as a tuning knob to adjust the trade-off between the desired accuracy and the computational complexity, see Section~\ref{sec:henrionreview}. Although it is a priori not clear
what is the order of the relaxation to arrive at the global minimum of $\gamma$, a heuristic method is to increase the order until not much improvement in the relaxed optimum $\tilde{\gamma}$ is observed anymore or until one is satisfied with the obtained value of $\tilde{\gamma}$. The obtained minimum values of $\tilde{\gamma}$ for various orders of
relaxation are given in Table~\ref{table:dinges1}. We used this heuristic approach for illustration purposes only. It is recommended to use the more systematic approach that is implemented in GloptiPoly \cite{Henrion2003} to arrive at the global minimum, and certify it numerically.
\begin{table}[htb]
\addtolength{\tabcolsep}{-2pt}
\centering
\begin{tabular}{|l|c|c|c|c|c|c|}
\hline
&\multicolumn{6}{c|}{order of LMI relaxation} \\
&1&2&3&4&5&10\\
\hline
$\tilde{\gamma}$ & 297.170 & 1.235 & 1.235 & 1.0718 & 1.0718 & 1.0718  \\
\hline
\end{tabular}
\vspace*{1mm}\caption{Upper bound $\tilde{\gamma}$ for various orders of LMI relaxation.}
\label{table:dinges1}\end{table}
Based on the figures in this table we expect that the global minimum of $\gamma$ is equal to $1.0718$, representing an overshoot of $7.18\%$. The corresponding Youla-Ku\v{c}era parameter is given by
\begin{equation}
q(s)=-32.0-17.0607s-3.0227s^2, 
\end{equation}
which yields the controller
\begin{equation}
C(s)=\frac{3.0s^3+20.0s^2+49s+100}{s^3+2.0s^2+10.9s}.
\label{eq:newcontclosedmultvar}\end{equation}
The step responses of both the original closed loop with the $d$-minimal controller \eqref{eq:origcontroller} and of the closed loop with controller \eqref{eq:newcontclosedmultvar} are depicted in Fig.~\ref{fig:newcontrollermultvara}, which shows a significant improvement as expected.
\begin{figure}[thb]\begin{center}
\includegraphics[angle=0]{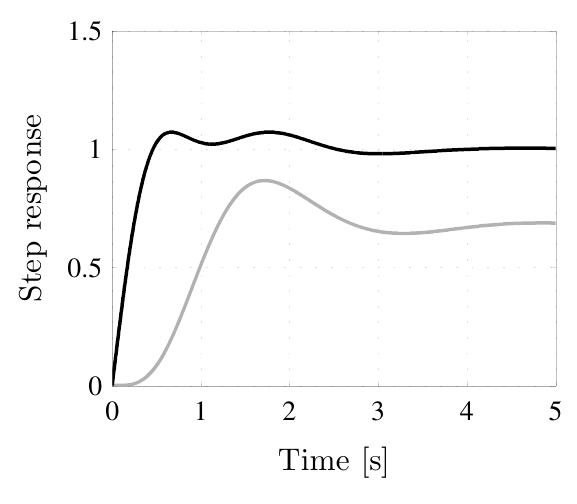}
\caption{New (black) and original (grey) step responses.}
\label{fig:newcontrollermultvara}
\end{center}\end{figure}
The maximum of the step response $y(t)$ equals $1.0714$ (i.e., 7.14\% overshoot), which indeed is $\varepsilon$-close to $\tilde{\gamma}\!=\!1.0718$.

This example showed that after controller design by pole placement it is possible to shape the transient time response of the system by assigning zeros to the closed-loop system through a suitable extension of the controller under pole invariance.

\section{Conclusions}\label{sec:henryconcl}
In this paper we provided a generally applicable design framework to obtain linear controllers for linear systems subject to time-domain constraints. In order to arrive at this design framework we extended recent results in \cite{Henrion2005} that applied only in case of {\em real} closed-loop poles and external signals (e.g.~references or disturbances) having Laplace transforms with {\em real} poles.  The design method is based on synthesizing linear controller via a closed-loop pole placement method in which the additional design freedom in terms of the Youla-Ku\v{c}era parameter is used to satisfy time-domain constraints on  the closed-loop signals based on sum-of-square techniques.  In order to extend the method to the practically relevant case where both inputs and closed-loop systems with complex conjugate poles are allowed, we proposed two relaxations.

The first relaxation, called the exponential bounds relaxation, exploited exponential upper and lower bounds on the response to any Laplace transformable input. Although this gives rise to potential conservatism, an example showed that by prescribing polynomial time-domain bounds, the system's performance to a step input can be improved with respect to the settling-time and the steady-state error. The second relaxation, the multivariate polynomial relaxation, removed the potential conservatism completely as we formally proved that it can approximate the original problem with arbitrary accuracy. Using these relaxations, we indicated how a polynomial optimization problem could be set up, in which next to constraint satisfaction, we could also optimize certain important closed-loop properties such as overshoot, settling-time and steady-state error. The resulting optimization problem can be solved using sum-of-squares and convex programming methods. As a consequence, the provided design framework is systematic in nature, as was also illustrated using a numerical example.

\section*{Ackwnoledgements}
D. Henrion acknowledges support by project No. 103/10/0628 of the Grant Agency of the Czech Republic.  

\bibliographystyle{plain}        
\bibliography{ltibiblio}           

\end{document}